\documentclass[12pt]{article}
\usepackage{amsmath}
\usepackage{amsthm}
\usepackage{amssymb}
\usepackage{graphicx}
\evensidemargin 0pt
\oddsidemargin 0pt
\textwidth 6.5in

\theoremstyle{plain}

\newtheorem{theorem}{Theorem}

\begin{document}

\title{A weighted interpretation for the super Catalan numbers}    

\author{Emily\ Allen \thanks{Department of Mathematical Sciences, Carnegie Mellon University, Pittsburgh, PA 15213, {\tt <eaallen@andrew.cmu.edu>}}\\
 Irina\ Gheorghiciuc \thanks{Department of Mathematical Sciences, Carnegie Mellon University, Pittsburgh, PA 15213, {\tt <gheorghi@andrew.cmu.edu>}}}
\date{}

\maketitle

\begin{abstract}
The super Catalan numbers $T(m,n)=(2m)!(2n)!/2m!n!(m+n)!$ are integers which generalize the Catalan numbers. With the exception of a few values of $m$, no combinatorial interpretation in known for $T(m,n)$. We give a weighted interpretation for $T(m,n)$ and develop a technique that converts this weighted interpretation into a conventional combinatorial interpretation in the case $m=2$.
\end{abstract}

\section{Introduction}

As early as 1874 Eugene Catalan observed that the numbers
\[ S(m,n) = \frac{{2m \choose m}{2n \choose n}}{{m + n \choose n}} = \frac{(2m)!(2n)!}{m!n!(m+n)!}\]
are integers. This can be proved algebraically by showing that, for every prime number $p$, the power of $p$ which divides $m!n!(m+n)!$ is at most the power of $p$ which divides $(2m)!(2n)!$. No combinatorial interpretation of $S(m,n)$ is yet known.

Interest in the subject in the modern era was reignited by Gessel \cite{SuperBallot}. He noted that, except for $S(0,0)$, the numbers $S(m,n)$ are even. Gessel refers to
\[T(m,n) = \frac{(2m)!(2n)!}{2[m!n!(m+n)!]}\]
as the super Catalan numbers.

Clearly $T(0,n)=\frac{1}{2} {2n \choose n}$, whilst $T(1,n) = C_n$ giving the Catalan numbers, a well-known sequence with over 66 combinatorial interpretations \cite{EC}.

An interpretation of $T(2,n)$ in terms of blossom trees has been found by Schaeffer \cite{Schaeffer}, and another in terms of cubic trees by Pippenger and Schleich \cite{P&S}. An interpretation of $T(2,n)$ in terms of pairs of Dyck paths with restricted heights has been found by Gessel and Xin \cite{GesselXin}. They have also provided a description of $T(3,n)$. An interpretation of $T(m,m+s)$ for $0 \leq s \leq 3$ in terms of restricted lattice paths has been given by Chen and Wang \cite{CW}.

A weighted interpretation of $S(m,n)$ based on a specialization of Krawtchouk polynomials has been given by Georgiadis, Munemasa and Tanaka \cite{GMT}. Their interpretation is in terms of lattice paths of length $2m+2n$ with a condition on the $y$-coordinate of the end-point of the $2m^{th}$ step.

In Section 2 we provide a weighted interpretation of $T(m,n)$ for $m,n \geq 1$ in terms of $2$-Motzkin paths of length $m+n-2$, or Dyck paths of length $2m+2n-2$. Since the lattice paths in \cite{GMT} are not Dyck paths, our interpretation is different from the one by Georgiadis, Munemasa and Tanaka. In Section 3 we are able to use our weighted interpretation to re-derive a result by Gessel and Xin \cite{GesselXin}, which we were then able to generalize for super Catalan Polynomials in \cite{PolyArt}.

\section{2-Motzkin Paths}

A 2-Motzkin path of length $n$ is a sequence of $n$ steps, starting at the origin and ending at the point $(n,0)$, where the allowable steps are diagonally \textit{up}, diagonally \textit{down}, and \textit{level}. The \textit{level} steps are either \textit{straight} or \textit{wavy}. Define $\mathcal{M}_n$ to be the set of all 2-Motzkin paths of length $n$. A Dyck path of length $2n$ is a 2-Motzkin path of length $2n$ with no \textit{level} steps.

Given a 2-Motzkin path, the level of a point is defined to be its $y$-coordinate. The height of a path is the maximum $y$-coordinate which the path attains. The height of a path $\pi$ will be denoted $h(\pi)$.

For a fixed $m\geq 0$, we call a 2-Motzkin path $\pi$ positive if the $m^{th}$ step begins on an even level, otherwise $\pi$ is negative. Let $P(m,n)$ be the number of positive 2-Motzkin paths of length $m+n-2$, and $N(m,n)$ be the number of negative 2-Motzkin paths of length $m+n-2$.

There is a well-known bijection between 2-Motzkin paths of length $n-1$ and Dyck paths of length $2n$ \cite{tag}. Given a 2-Motzkin path, read the steps from left to right and do the following replacements:
replace an \textit{up} step with two \textit{up} steps, a \textit{down} step with two \textit{down} steps, a \textit{straight} step with an \textit{up} step followed by a \textit{down} step, and
a \textit{wavy} step with a \textit{down} step followed by an \textit{up} step. The resulting path may touch level $-1$, thus, in addition, add an \textit{up} step to the beginning of the 
resulting path and a \textit{down} step to the end to obtain a Dyck path.

\begin{theorem}\label{mainTheorem}
 For $m,n \geq 1$, the super Catalan number $T(m,n)$ counts the number of positive 2-Motzkin paths minus the number of negative 2-Motzkin paths. That is,
\[T(m,n) = P(m,n) - N(m,n).\]
\end{theorem}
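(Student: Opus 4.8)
\emph{Proof sketch.} The plan is to cut a $2$-Motzkin path at its $(m-1)$-st vertex, rewrite the signed count as a convolution of ballot numbers, and then collapse that convolution. Let $a_{k,j}$ denote the number of $2$-Motzkin paths of length $k$ running from level $0$ to level $j$ (so $a_{k,j}=0$ unless $0\le j\le k$). A $2$-Motzkin path of length $m+n-2$ splits uniquely as a path of length $m-1$ from $0$ to some level $j\ge 0$ followed by a path of length $n-1$ from $j$ back to $0$; since reversing a $2$-Motzkin path is a length-preserving involution interchanging \emph{up} and \emph{down} steps, the second piece is counted by $a_{n-1,j}$ as well, and the path is positive exactly when $j$ is even. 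Hence
\[
 P(m,n)-N(m,n)=\sum_{j\ge 0}(-1)^{j}\,a_{m-1,j}\,a_{n-1,j}.
\]
I would then pin down $a_{k,j}$: applying the step-doubling map of the introduction to a $2$-Motzkin path of length $k$ from $0$ to $j$ and prepending an \emph{up} step yields a $\pm1$ lattice path of length $2k+1$ from $0$ to $2j+1$ that never dips below $0$, so the reflection principle gives $a_{k,j}=\binom{2k+1}{k-j}-\binom{2k+1}{k-j-1}$.

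The key step is to symmetrize the sum. Put $\tilde a_{k,j}=\binom{2k+1}{k-j}-\binom{2k+1}{k-j-1}$ for every $j\in\mathbb Z$; Pascal's symmetry gives $\tilde a_{k,-2-j}=-\tilde a_{k,j}$, so $(-1)^{j}\tilde a_{m-1,j}\tilde a_{n-1,j}$ is invariant under $j\mapsto-2-j$, its unique fixed point $j=-1$ contributes $0$, and $\tilde a_{k,j}=a_{k,j}$ for $j\ge 0$; hence $2\bigl(P(m,n)-N(m,n)\bigr)=\sum_{j\in\mathbb Z}(-1)^{j}\tilde a_{m-1,j}\tilde a_{n-1,j}$. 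Expanding each $\tilde a$ as a difference of two binomials and multiplying out produces four sums of the form $\sum_{j\in\mathbb Z}(-1)^{j}\binom{2m-1}{p-j}\binom{2n-1}{q-j}$, each a Vandermonde-type convolution equal, up to a sign, to a single coefficient of $(1-z)^{2m-1}(1+z)^{2n-1}$; the four contributions telescope to
\[
 2\bigl(P(m,n)-N(m,n)\bigr)=(-1)^{m-1}\Bigl([z^{m+n-2}]-[z^{m+n}]\Bigr)(1-z)^{2m-1}(1+z)^{2n-1}.
\]
Since $[z^{m+n-2}]G-[z^{m+n}]G=-[z^{m+n}]\bigl((1-z^{2})G\bigr)$ and $(1-z^{2})(1-z)^{2m-1}(1+z)^{2n-1}=(1-z)^{2m}(1+z)^{2n}$, this collapses to $P(m,n)-N(m,n)=\tfrac{(-1)^{m}}{2}\,[z^{m+n}](1-z)^{2m}(1+z)^{2n}$.

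To conclude, extract that coefficient: $[z^{m+n}](1-z)^{2m}(1+z)^{2n}=\sum_{k}(-1)^{k}\binom{2m}{k}\binom{2n}{m+n-k}=(-1)^{m}\sum_{j}(-1)^{j}\binom{2m}{m+j}\binom{2n}{n+j}$, and the classical identity $\sum_{j}(-1)^{j}\binom{2m}{m+j}\binom{2n}{n+j}=S(m,n)$ (von Szily), together with $T(m,n)=\tfrac12 S(m,n)$, gives $P(m,n)-N(m,n)=T(m,n)$. The genuine difficulty, I expect, is the collapse in the preceding paragraph: the signed sum $\sum_{j\ge 0}(-1)^{j}a_{m-1,j}a_{n-1,j}$ has no apparent closed form, and the trick that unlocks it is symmetrizing over $j\in\mathbb Z$ via $j\mapsto-2-j$ so that a single Vandermonde convolution handles every term at once; once the sum becomes a coefficient of $(1-z)^{2m}(1+z)^{2n}$, von Szily's identity (itself provable by a short sign-reversing involution on pairs of lattice paths) finishes things. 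An approach bypassing the explicit $a_{k,j}$ is purely generating-functional: $2$-Motzkin paths returning to $0$ have generating function $C(x)^{2}$, where $C(x)=\sum_{n\ge0}C_nx^n$, so $P(m,n)-N(m,n)=[x^{m-1}y^{n-1}]\,C(x)^{2}C(y)^{2}\big/\bigl(1+(C(x)-1)(C(y)-1)\bigr)$, and Lagrange inversion with $x=(C(x)-1)/C(x)^{2}$ reduces the theorem to an equivalent single binomial identity.
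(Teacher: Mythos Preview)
Your argument is correct but takes a genuinely different route from the paper's. The paper gives a short inductive proof: setting $F(m,n)=P(m,n)-N(m,n)$, it checks $F(1,n)=C_n$ (every path begins at level~$0$) and then verifies combinatorially that $4F(m,n)=F(m+1,n)+F(m,n+1)$ by casing on whether step~$m$ of a length-$(m{+}n{-}1)$ path is diagonal (it is then counted once with each sign in $F(m+1,n)+F(m,n+1)$ and cancels) or level (contracting that step yields a length-$(m{+}n{-}2)$ path, the two level colours supplying the factor~$4$). Since $T$ is known to satisfy the same recurrence (Rubenstein's identity $4T(m,n)=T(m+1,n)+T(m,n+1)$) with the same initial values, $F=T$. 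You instead evaluate $P-N$ in closed form: the cut at vertex $m-1$ already yields the ballot-number expansion $\sum_{r\ge1}(-1)^{r-1}B(m,r)B(n,r)$ that the paper records only afterward (your $a_{k,j}$ is their $B(k{+}1,j{+}1)$), and the $j\mapsto-2-j$ symmetrization together with Vandermonde collapses it to a single coefficient of $(1-z)^{2m}(1+z)^{2n}$, which von~Szily's identity identifies with $S(m,n)$. The paper's proof is shorter and purely bijective but leans on the known recurrence for $T$; yours is longer and computational but is self-contained---it assumes nothing about $T$ beyond its definition---and in passing gives an independent algebraic proof that the ballot-number sum equals $T(m,n)$. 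The closing generating-function remark is plausible but, as written, is only a sketch and would need the Lagrange-inversion step spelled out to stand on its own.
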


\begin{proof}

The super Catalan numbers satisfy the following identity, attributed to Dan Rubenstein \cite{SuperBallot},
\begin{equation}\label{rec}
 4T(m,n) = T(m+1,n) + T(m,n+1).
\end{equation}

Given a 2-Motzkin path $\pi$ of length $m+n-2$, define the weight of $\pi$ to be $1$ if $\pi$ is positive and $-1$ if $\pi$ is negative.
	
Let $F(m,n)$ be the sum of the weights of all 2-Motzkin paths of length $m+n-2$, that is, $F(m,n) = P(m,n) - N(m,n)$. To prove $F(m,n) = T(m,n)$, we will check the initial condition 
\[F(1,n) = C_n\]
and the recurrence given by Eq. \ref{rec},
\[4F(m,n) = F(m+1,n) + F(m,n+1).\]

For $m=1$, the weight of any 2-Motzkin path of length $n$ is $1$ because the first step always starts at the level $y=0$. Hence $F(1,n)=C_n$, giving the number of 2-Motzkin paths of length $n-1$. 

Next we consider the sum of the weights counted by $F(m,n+1) + F(m+1,n)$. If a 2-Motzkin path of length $m+n-1$ has an \textit{up} or \textit{down} step at step $m$, it will be counted once as a positive path and once as a negative path, and will not contribute to this sum.

Paths of length $m+n-1$ with a \textit{level} step at step $m$ will be counted twice. Let $\pi$ be such a 2-Motzkin path. By contracting the $m^{th}$ step in $\pi$, we obtain a 2-Motzkin path of length $m+n-2$; furthermore, every 2-Motzkin path of length $m+n-2$ can be obtained by contracting exactly two 2-Motzkin paths of length $m+n-1$, one with a \textit{wavy} step at step $m$ and one with a \textit{straight} step at step $m$. 

Thus the sum of the weights counted by $F(m,n+1) + F(m+1,n)$ is twice the sum of the weights of 2-Motzkin paths of length $m+n-1$ with \textit{level} steps at step $m$; which is four times the sum of the weights of 2-Motzkin paths of length $m+n-2$, that is, $4F(m,n)$.
\end{proof}

\begin{figure}[h]
\centering
 \includegraphics[scale=.5]{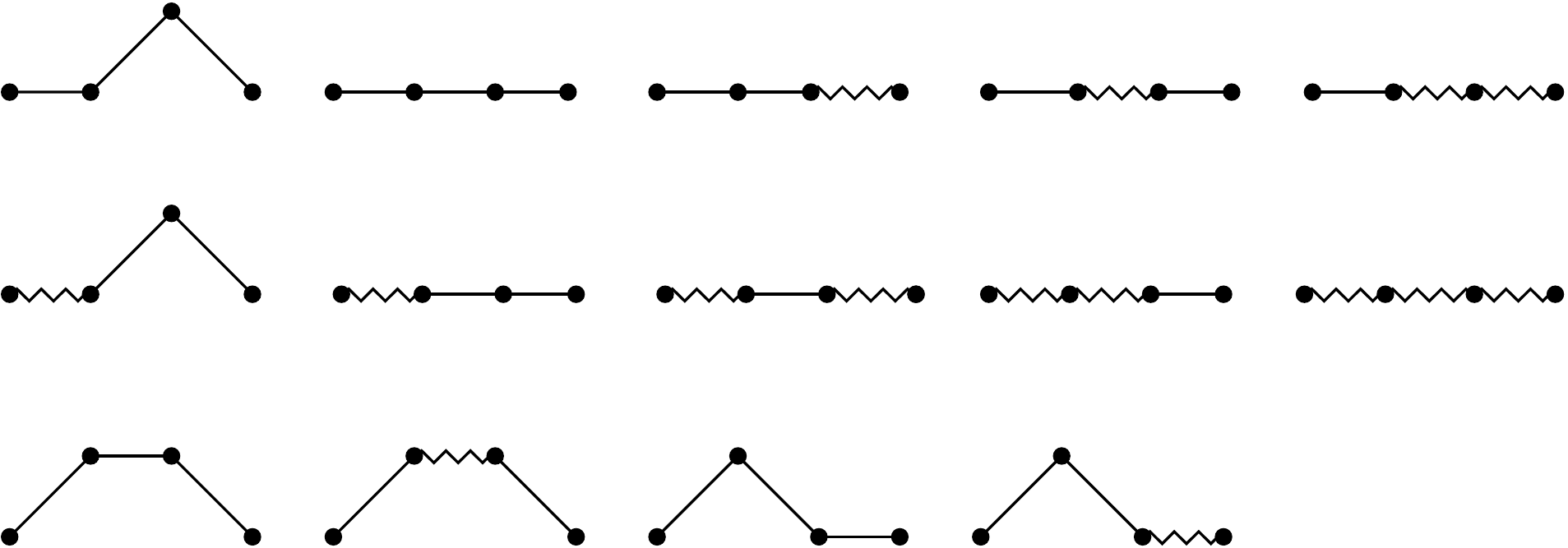}
\caption{When m=2, there are ten positive 2-Motzkin paths and four negative 2-Motzkin paths of length 3. $T(2,3) = P(2,3) - N(2,3) = 6.$}
\end{figure}

This weighted interpretation can be used to prove combinatorially that $T(m,n)=T(n,m)$. Let $\pi$ be a path of length $m+n-2$ counted by $T(m,n)$. Consider the reverse of a path to be that path read from right to left. Since the $m^{th}$ step of $\pi$ and the $n^{th}$ step of the reverse of $\pi$ start at the same point, mapping a path to its reverse is a weight preserving involution between the 2-Motzkin paths counted by $T(m,n)$ and the 2-Motzkin paths counted by $T(n,m)$.

We can reformulate the result of Theorem \ref{mainTheorem} in terms of Dyck paths. In this case $P(m,n)$ is the number of Dyck paths of length $2m+2n - 2$ whose $2m-1^{st}$ step ends on level $1 \pmod 4$, and $N(m,n)$ is the number of Dyck paths of length $2m+2n-2$ whose $2m-1^{st}$ step ends on level $3 \pmod 4$. Let $B(n,r)$ be the number of ballot paths that start at the origin, end at the point $(2n-1,2r-1)$, and do not go below the $x$-axis. It is well known that $B(n,r) = \frac{r}{n}{2n \choose n+r}$. Then

\begin{equation}
T(m,n) = \sum_{r \geq 1} (-1)^{r-1}B(m,r)B(n,r)
\end{equation}
and
\begin{equation}\label{balloteq}
T(m,n) = \sum_{r\geq 1} (-1)^{r-1}\frac{r^2}{nm}{2m \choose m+r}{2n \choose n+r}.
\end{equation}

Eq. \ref{balloteq} is a new identity for the super Catalan number $T(m,n)$. We provide a $q$-analog of it in \cite{PolyArt}, its algebraic proof appears in \cite{thesis}.

\section{Combinatorial Techniques}

In \cite{GesselXin} Gessel and Xin use an inclusion-exclusion argument to prove the following result.

\begin{theorem}[Gessel, Xin] For $n \geq 1$, the number $T(2,n)$ counts the ordered pairs of Dyck paths $(\pi,\rho)$ of total length $2n$ with $|h(\pi) - h(\rho)| \leq 1$. Here $\pi$ and $\rho$ are allowed to be the empty path. The height of the empty path is zero. 
\label{Gessel}
\end{theorem}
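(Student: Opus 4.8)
The plan is to deduce Theorem~\ref{Gessel} from the case $m=2$ of Theorem~\ref{mainTheorem} by converting the signed count $T(2,n)=P(2,n)-N(2,n)$ into an honest enumeration of pairs of Dyck paths. First I would assemble the numbers involved. A $2$-Motzkin path of length $n$ is positive exactly when its first step is a level step, and since there are two kinds of level step followed by an arbitrary $2$-Motzkin path of length $n-1$, this gives $P(2,n)=2C_n$; as the total number of $2$-Motzkin paths of length $n$ is $C_{n+1}$, we get $N(2,n)=C_{n+1}-2C_n$. (By the ballot-path reformulations of Theorem~\ref{mainTheorem} this also equals $B(n,2)$ and the number of Dyck paths of length $2n+2$ beginning with three up steps.) On the other side, the number of \emph{all} ordered pairs $(\pi,\rho)$ of Dyck paths with $|\pi|+|\rho|=2n$ is the Catalan convolution $\sum_{j=0}^{n}C_{j}C_{n-j}=C_{n+1}$. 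Hence $T(2,n)=P(2,n)-N(2,n)=C_{n+1}-2N(2,n)$.

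The heart of the argument is a bijection $\Phi$ from the $2$-Motzkin paths of length $n$ onto the ordered pairs $(\pi,\rho)$ of Dyck paths with $|\pi|+|\rho|=2n$, designed so that $\Phi$ sends the negative paths exactly onto the pairs with $h(\pi)\ge h(\rho)+2$, and therefore the positive paths exactly onto the pairs with $h(\pi)\le h(\rho)+1$. I would build $\Phi$ from the first-return decomposition of the $2$-Motzkin path, matched against a compatible recursive decomposition of the pair: a path that begins with a level step peels off to a shorter path (and a smaller pair), whereas a path that begins with an up-step, written $U\sigma D\mu$ where $U$ is the initial up-step, $D$ its first return to the axis, $\sigma$ the lifted $2$-Motzkin path in between, and $\mu$ a $2$-Motzkin path, must be sent to a pair whose first component overshoots the height of the second by at least two --- so it is precisely the initial up-step and its first return that have to be arranged to create the required height excess, with $\sigma$ and $\mu$ governing the rest. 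Checking that this recipe is a well-defined bijection with exactly the stated height behaviour is where the real work lies.

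Granting $\Phi$, the theorem follows in a line. The ``good'' pairs --- those with $|h(\pi)-h(\rho)|\le1$ --- are obtained from the pairs with $h(\pi)\le h(\rho)+1$ by deleting those with $h(\rho)\ge h(\pi)+2$, which form a subfamily of the former; so the number of good pairs is $\#\{\,h(\pi)\le h(\rho)+1\,\}-\#\{\,h(\rho)\ge h(\pi)+2\,\}$. By $\Phi$ the first term equals $P(2,n)$; by the swap $(\pi,\rho)\mapsto(\rho,\pi)$ the second term equals $\#\{\,h(\pi)\ge h(\rho)+2\,\}$, which by $\Phi$ equals $N(2,n)$. Hence the number of good pairs is $P(2,n)-N(2,n)=T(2,n)$. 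Equivalently, of the $C_{n+1}=P(2,n)+N(2,n)$ pairs in all, the $2N(2,n)$ ``bad'' ones --- those with $|h(\pi)-h(\rho)|\ge2$ --- are accounted for by $\Phi$ together with the swap, and the composite $\Phi^{-1}\circ(\text{swap})\circ\Phi$ realizes the promised injection of the negative paths into the positive paths, whose complement is the Gessel--Xin family.

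The main obstacle is precisely the construction of $\Phi$: the height of a Dyck path, and still more the gap $h(\pi)-h(\rho)$, is a global statistic, whereas ``the path begins with an up-step'' is local, so the task is to transfer the initial step of the $2$-Motzkin path into a controlled height difference and to maintain that control as the recursion unwinds, neither losing information nor double counting. A more computational alternative, closer to Gessel and Xin's original reasoning, bypasses $\Phi$: for each $k$, count the pairs with $h(\rho)=k$ and $h(\pi)\ge k+2$ using the reflection principle, then sum the resulting telescoping series over $k$ and over the split $|\pi|+|\rho|=2n$, and verify that the total is $N(2,n)=B(n,2)$; there the weighted interpretation is what identifies $B(n,2)$ as the target and organizes the bookkeeping. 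Either way, $T(2,n)=C_{n+1}-2N(2,n)$ equals the number of ordered pairs $(\pi,\rho)$ of Dyck paths with $|\pi|+|\rho|=2n$ and $|h(\pi)-h(\rho)|\le1$, which is Theorem~\ref{Gessel}.
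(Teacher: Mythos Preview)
Your framework is sound: the identities $P(2,n)=2C_n$, $N(2,n)=C_{n+1}-2C_n$, and $\sum_{j=0}^{n}C_jC_{n-j}=C_{n+1}$ are correct, and reducing the theorem to the claim that the pairs with $h(\pi)\ge h(\rho)+2$ are equinumerous with the negative $2$-Motzkin paths is legitimate. But there is a genuine gap: the bijection $\Phi$ is never built. You say so yourself (``where the real work lies'', ``the main obstacle is precisely the construction of $\Phi$''), and the recursive sketch is too thin to evaluate. In particular, you do not say how peeling off a level step --- of which there are two kinds --- corresponds to any operation on a pair $(\pi,\rho)$ of total length $2n$ that respects the condition $h(\pi)\le h(\rho)+1$, nor how the first-return decomposition $U\sigma D\mu$ is to manufacture a height excess of at least two while using $\sigma$ and $\mu$ bijectively. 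The tension you correctly identify --- a local first-step condition versus a global height-gap statistic --- is precisely what has to be overcome, and nothing in the proposal overcomes it. The alternative ``reflection plus telescoping'' route you mention is also only gestured at.

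The paper takes a different and fully explicit route. After translating Theorem~\ref{mainTheorem} to Dyck paths of length $2n+2$, it writes $T(2,n)=2|\mathcal{D}_n|-|\mathcal{N}_{n+1}|$, where $\mathcal{N}_{n+1}$ consists of Dyck paths beginning $UUU$. It then splits $\mathcal{N}_{n+1}$ into two pieces $\mathcal{N}^{*}_{n+1}$ and $\mathcal{N}^{**}_{n+1}$ according to whether level one is attained between the third step and the rightmost maximum, and builds two explicit injections $f,g$ into $\mathcal{D}_n$ by local surgery (delete the second and third steps, then flip one or two designated steps near a specified maximum). The complement of $f$ is the single height-one path; the complement of $g$ is the set of $\pi\in\mathcal{D}_n$ with $h_{+}(\pi)\le h_{-}(\pi)+2$, where $h_{\pm}$ are the maxima before and after the last level-one point preceding the rightmost maximum. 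This gives an intermediate interpretation (Theorem~\ref{T(2,n)}), and a final explicit bijection --- again flip two designated steps --- converts those paths into the Gessel--Xin pairs. So where your proposal defers the hard step to an unconstructed $\Phi$, the paper replaces it with concrete, checkable surgeries on Dyck paths.
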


Our goal in this section is to derive a similar result using Theorem \ref{mainTheorem} and some direct Dyck paths subtraction techniques that will be easier to generalize for larger values of $m$. We already were able to generalize this result to super Catalan Polynomials in \cite{PolyArt}.

Let $\mathcal{D}_{n}$ denote the set of Dyck paths of length $2n$. For a path $\pi \in \mathcal{D}_n$, let $X$ be the last, from left to right, level one point up to and including the right-most maximum $R$ on $\pi$. Let $h_{-}(\pi)$ denote the maximum level that the path $\pi$ reaches from its beginning until and including point $X$, and $h_{+}(\pi)$ denote the maximum level that the path $\pi$ reaches after and including point $X$. Obviously $h_-(\pi) \leq h_+(\pi) = h(\pi)$.

\begin{theorem} Let $n \geq 1$. The super Catalan number $T(2,n)$ counts Dyck paths $\pi$ of length $2n$ such that $h_{+}(\pi) \leq h_{-}(\pi)+2$, the path of height one counting twice. 
\label{T(2,n)}
\end{theorem}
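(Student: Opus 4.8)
\emph{The plan} is to specialize Theorem~\ref{mainTheorem} to $m=2$, transport the resulting signed count onto Dyck paths, and then cancel the positive family against the negative family by an explicit injection — the point $X$ and the numbers $h_\pm(\pi)$ being exactly the data that survive the cancellation.

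\emph{Reduction.} For $m=2$ a $2$-Motzkin path of length $n$ cannot begin with a down step, so it is positive exactly when its first step is a (straight or wavy) level step and negative exactly when its first step is up. Deleting the initial level step and applying the $\mathcal{M}_{k}\leftrightarrow\mathcal{D}_{k+1}$ bijection identifies the positive paths with two copies of $\mathcal{D}_n$; the arch decomposition $U\rho_1D\rho_2$ of a negative path identifies the negative family with the ordered pairs of nonempty Dyck paths of total length $2n$. (The case $n=1$, where the ``second step'' must be read off at the endpoint, is checked directly against $T(2,1)=2$.) Hence Theorem~\ref{mainTheorem} gives $T(2,n)=2C_n-(C_{n+1}-2C_n)=4C_n-C_{n+1}$ — consistent with Eq.~\eqref{rec} for $m=1$ — and it suffices to show that the number of Dyck paths $\pi\in\mathcal{D}_n$ with $h_+(\pi)\le h_-(\pi)+2$ equals $4C_n-C_{n+1}-1$.

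\emph{The subtraction.} I would build an injection $\Psi$ from the negative family into the two copies of $\mathcal{D}_n$ and show that the $T(2,n)$ flavoured paths missing from its image are precisely $\{(\pi,1):h_+(\pi)\le h_-(\pi)+2\}$ together with $((UD)^n,2)$; since $(UD)^n$ is the unique Dyck path whose second flavour is also missing, this yields exactly the asserted count with the height-one path counted twice. The device for identifying the image is the decomposition of $\pi\in\mathcal{D}_n$ at the point $X$, $\pi=\pi_-\pi_+$: the prefix $\pi_-$ is a lattice path from level $0$ to level $1$ that stays $\ge 0$, with $h_-(\pi)$ its maximum, while for $\pi\ne(UD)^n$ the suffix $\pi_+$ is forced to leave $X$ by an up step, to stay at level $\ge 2$ until the rightmost maximum $R$, and only then to descend, so that $h_+(\pi)=h(\pi)$. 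Since $\pi_-D$ and $U\pi_+$ are themselves Dyck paths, of heights $h_-(\pi)$ and $h_+(\pi)$, a Dyck path equipped with such a split carries the same information as an ordered pair of Dyck paths, and $\Psi$ is (a length-corrected version of) the inverse of this gluing, with the flavour bit absorbing the bounded ambiguity. Reading this off, $(\pi,i)$ should lie in the image of $\Psi$ exactly when $\pi$ is ``bad'' ($h_+(\pi)>h_-(\pi)+2$), or when $i=2$ and $\pi\ne(UD)^n$.

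\emph{Main obstacle.} The real content is making $\Psi$ simultaneously injective and onto this prescribed image. A naive gluing — concatenating the two Dyck paths, or flipping the third step of a path that begins $UUU$ — is genuinely many-to-one (small examples produce two distinct ordered pairs with the same image), so the construction must exploit the rigidity of $\pi_+$ near $X$ (immediate rise to level $2$, staying $\ge 2$ up to $R$, and $h_+(\pi)=h(\pi)$) to disambiguate, and one must track the height deficit $h_+(\pi)-h_-(\pi)$ to see that the surviving paths are exactly those of deficit at most $2$. Pinning down the single exception $(UD)^n$ (the source of the ``counting twice'') and checking the base case $n=1$ are the remaining pieces of the bookkeeping; this is also the part the authors indicate will generalize to $m>2$.
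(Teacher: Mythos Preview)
Your reduction is correct and matches the paper: $P(2,n)=2C_n$ and the negative family is (equivalently) $\mathcal{N}_{n+1}$, the Dyck paths of length $2n+2$ beginning $UUU$, so $T(2,n)=2|\mathcal{D}_n|-|\mathcal{N}_{n+1}|$. But the proof stops exactly where you flag it. You specify what $\Psi$ must accomplish --- inject the negative family into $\mathcal{D}_n\times\{1,2\}$ with complement $\{(\pi,1):h_+(\pi)\le h_-(\pi)+2\}\cup\{((UD)^n,2)\}$ --- and you note that naive gluings fail, but you never say what $\Psi$ is. The decomposition $\pi=\pi_-\pi_+$ at $X$ is the right coordinate system for the \emph{target}, but it does not by itself tell you how to split the \emph{source}, and that split is the whole content.

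The paper supplies the missing idea: partition $\mathcal{N}_{n+1}$ according to whether the path touches level~$1$ between its third step and its rightmost maximum $R$. On the piece $\mathcal{N}^*_{n+1}$ that does \emph{not} touch, delete steps $2,3$ and flip the down step after $R$ to an up step; this is a bijection onto $\mathcal{D}_n\setminus\{(UD)^n\}$ and accounts for your flavour~$2$. On the piece $\mathcal{N}^{**}_{n+1}$ that \emph{does} touch, delete steps $2,3$, flip the two down steps at the \emph{first} such level-$1$ return to up steps (yielding a ballot path ending at level~$2$), then flip the up step into the leftmost maximum to a down step; this is a bijection onto $\{\pi\in\mathcal{D}_n:h_+(\pi)\ge h_-(\pi)+3\}$ and accounts for your flavour~$1$. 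The point $X$ and the gap $h_+-h_-$ appear because the first level-$1$ return in the source becomes the last level-$1$ point before the rightmost maximum in the image, and the two inserted up steps force a height jump of at least~$3$ across it. Without this dichotomy and the two explicit maps, your $\Psi$ is a specification, not a construction.
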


\begin{proof} Let $\mathcal{A}_{n}$ denote the set of Dyck paths of length $2n$ that start with {\it up, down, up}, $\mathcal{B}_{n}$ denote the set of Dyck paths of length $2n$ that start with {\it up, up, down}, and $\mathcal{N}_{n}$ denote the set of Dyck paths of length $2n$ that start with {\it up, up, up}.

By Theorem \ref{mainTheorem}, $T(2,n)=P(2,n)-N(2,n)$, where $P(2,n)$ is the number of 2-Motzkin paths of length $n$ that start with a level step, and $N(2,n)$ is the number 2-Motzkin paths of length $n$ that start with an up step. The canonical bijection between 2-Motzkin paths and Dyck paths leads to the following interpretation: $$T(2,n)=|\mathcal{A}_{n+1}|+|\mathcal{B}_{n+1}|-|\mathcal{N}_{n+1}|.$$

By contracting the second and third steps in the paths in $\mathcal{A}_{n+1}$ and $\mathcal{B}_{n+1}$ we get twice $\mathcal{D}_{n}$, so $|\mathcal{A}_{n+1}|=|\mathcal{B}_{n+1}|=C_n$. 

We consider all paths $\pi$ in $\mathcal{N}_{n+1}$ that do not attain level one between the third step of $\pi$ and the right-most maximum point $R$ on $\pi$. The set of all such paths will be denoted by $\mathcal{N}^*_{n+1}$. Let $\mathcal{N}^{**}_{n+1}=\mathcal{N}_{n+1}-\mathcal{N}^*_{n+1}$. Then $$T(2,n)=2|\mathcal{D}_{n}|-|\mathcal{N}^*_{n+1}|-|\mathcal{N}^{**}_{n+1}|.$$

First we establish an injection $f$ from $\mathcal{N}^*_{n+1}$ to $\mathcal{D}_n$. For $\pi \in \mathcal{N}^*_{n+1}$, let $RQ$ be the {\it down} step that follows the right-most maximum point $R$ of $\pi$. We define $f(\pi)$ to be the path obtained by removing the second and third steps in $\pi$, both of which are {\it up} steps, and then substituting the {\it down} step $RQ$ by an {\it up} step. See Figure~\ref{Mn+1}. Since $\pi$ does not attain level one between its third step and $R$, $f(\pi)$ is a Dyck path of length $2n$. Note that $Q$ is the left-most maximum on $f(\pi)$. Also, since at least two {\it up} steps precede $Q$ on $f(\pi)$, the height of $f(\pi)$ is at least two. Thus the Dyck path of height one and length $2n$ is not in the image of $f$.

\begin{figure}[!ht]
\begin{center}
\includegraphics[scale=0.5]{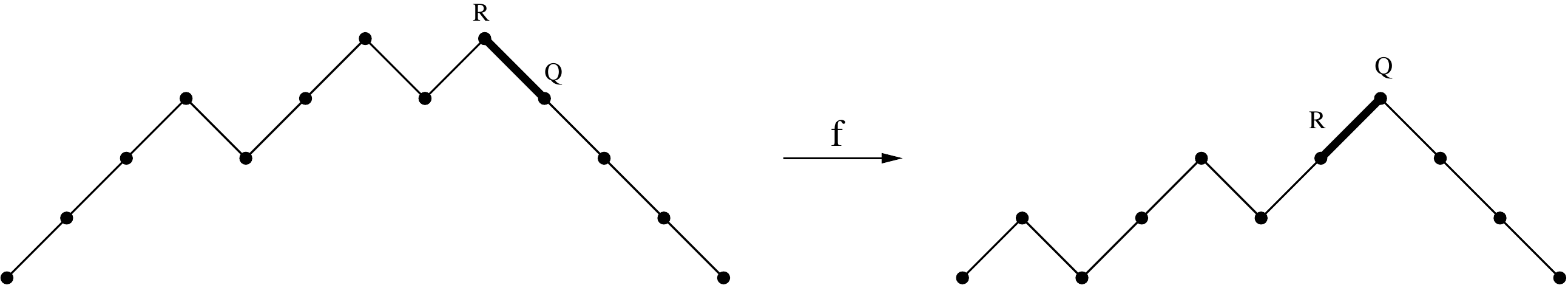}
\caption{$f$ removes the $2^{nd}$ and $3^{rd}$ steps, substitutes the {\it down} step $RQ$ by an {\it up} step}
\label{Mn+1}
\end{center}
\end{figure}

We will show that $f$ is an injection and that the only path in $\mathcal{D}_n$ that is not in the image of $f$ is the Dyck path of height one. Let $\rho$ be in $\mathcal{D}_n$ of height $h(\rho)>1$. Let $Q$ be the left-most maximum on $\rho$ and $RQ$ be the {\it up} step that precedes $Q$. Insert two {\it up} steps after the first step of $\rho$, then substitute the {\it up} step $RQ$ by a {\it down} step, which makes $R$ the right-most maximum of the resulting path $\pi$. The path $\pi$ is in $\mathcal{N}^*_{n+1}$ and $f(\pi)=\rho$.

It follows that $|\mathcal{D}_{n}|-|\mathcal{N}^*_{n+1}|$ counts only one path, the Dyck path of length $2n$ and height one.  

Next we establish an injection $g$ from $\mathcal{N}^{**}_{n+1}$ to $\mathcal{D}_n$. A path $\pi$ in $\mathcal{N}^{**}_{n+1}$ attains level one between its third step and the right-most maximum point $R$ on $\pi$. Let $Y$ be the first point between the third step of $\pi$ and $R$ at which $\pi$ attains level one. The segment $XY$ that consists of two {\it down} steps precedes $Y$. We remove the second and third steps of $\pi$ and substitute the two {\it down} steps $XY$ by two {\it up} steps. See Figure~\ref{M1}. The resulting path is a ballot path of length $2n$ that ends at level two. From left to right, $X$ is the last level one point on this ballot path. The maximum level that this path reaches up to and including point $X$ is less than the maximum level it reaches after and including point $X$ by at least 4. 

\begin{figure}[!ht]
\begin{center}
\includegraphics[scale=0.5]{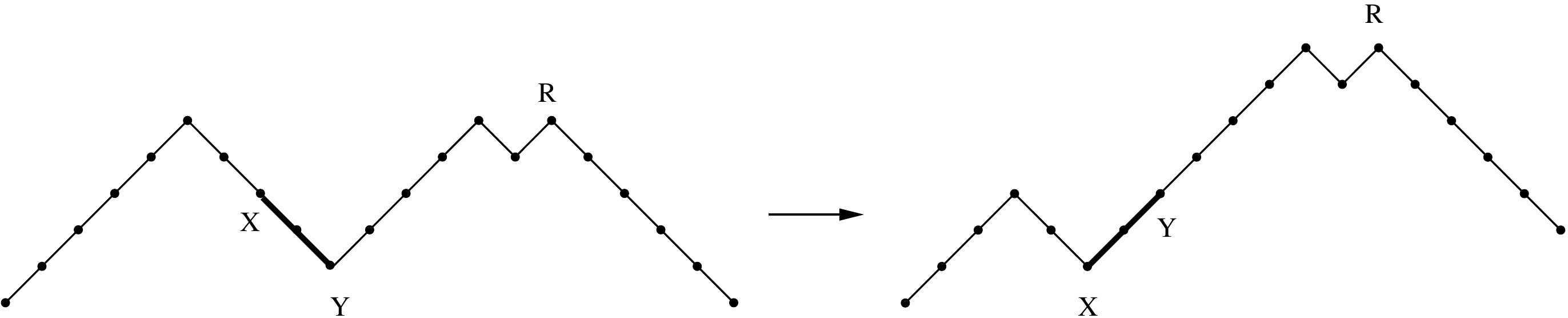}
\caption{First part of $g$ action is removing the $2^{nd}$ and $3^{rd}$ steps, substituting the two {\it down} steps $XY$ by two {\it up} steps}
\label{M1}
\end{center}
\end{figure}

Let $L$ be the left-most maximum point of this ballot path and $ML$ be the {\it up} step that precedes $L$. Substitute the {\it up} step $ML$ by a {\it down} step. See Figure ~\ref{M2}. The resulting path $g(\pi)$ is in $\mathcal{D}_n$ and $M$ is its right-most maximum. Note that $X$ is the last level one point on $g(\pi)$ before its right-most maximum $M$ and $h_{+}(g(\pi)) \geq h_{-}(g(\pi))+3$. 

\begin{figure}[!ht]
\begin{center}
\includegraphics[scale=0.5]{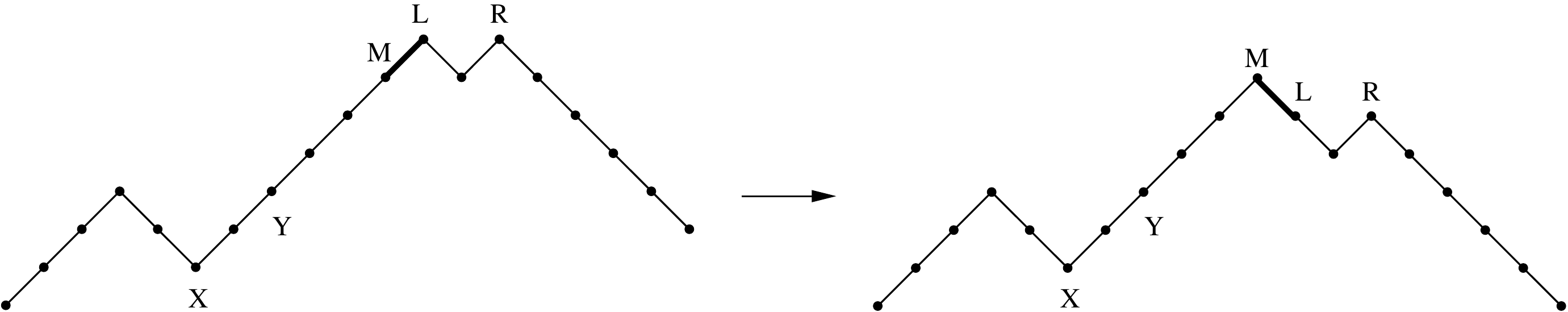}
\caption{Second part of $g$ action is substituting the {\it up} step $ML$ with a {\it down} steps}
\label{M2}
\end{center}
\end{figure}

We will show that $g$ is an injection and that the only paths in $\mathcal{D}_n$ that are not in the image of $g$ are the Dyck paths $\sigma$ that satisfy $h_{+}(\sigma) \leq h_{-}(\sigma)+2$. Let $\rho$ be in $\mathcal{D}_n$ and $h_{+}(\rho) \geq h_{-}(\rho)+3$. Let $M$ be the right-most maximum on $\rho$ and $ML$ be the {\it down} step that follows $M$. Substitute the {\it down} step $ML$ by an {\it up} step. The result is a ballot path of length $2n$ that ends at level two. Note that $L$ is the left-most maximum on this ballot path. Let $R$ denote the right-most maximum on this ballot path. From left to right, $X$ is the last level one point on this ballot path. The maximum level that this path reaches up to and including point $X$ is less than the maximum level it reaches after and including point $X$ by at least 4.  Since $X$ is the last level one point, it is followed by the segment $XY$ that consists of two {\it up} steps. Next we insert two {\it up} steps after the first step of this ballot path and then substitute the two {\it up} steps $XY$ by two {\it down} steps. The resulting path is a Dyck path of length $2n+2$, we denote it by $\pi$. Point $Y$ is the first level one point after the third step of $\pi$. Note that the maximum level that this Dyck path reaches after $Y$ is at least the maximum level that this Dyck path reaches up to and including $Y$, which means that the right-most maximum $R$ is to the right of $Y$. If follows that $p \in \mathcal{N}^{**}_{n+1}$ and $g(\pi)=\rho$.

Thus $|\mathcal{D}_{n}|-|\mathcal{N}^{**}_{n+1}|$ counts Dyck paths $\pi$ that satisfy $h_{+}(\pi) \leq h_{-}(\pi)+2$. 

\end{proof}

We will now show a simple bijection between the objects described in Theorem~\ref{T(2,n)} and Theorem ~\ref{Gessel}.

\begin{figure}[!ht]
\begin{center}
\includegraphics[scale=0.5]{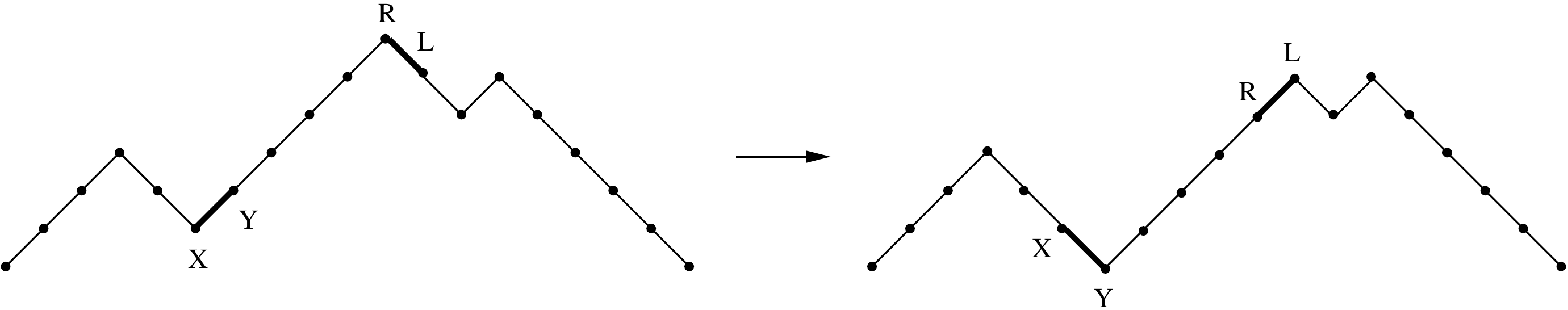}
\caption{From Dyck paths described in Theorem~\ref{T(2,n)} to pairs of Dyck path described in Theorem~\ref{Gessel}}
\label{gessel}
\end{center}
\end{figure}

Let $\pi$ be a Dyck path of length $2n$ and height $h(\pi)>1$, such that $h_{+}(\pi) \leq h_{-}(\pi)+2$. Let $R$ be the right-most maximum of $\pi$. Note that $X$ is followed by an {\it up} step $XY$ and $R$ is followed by a {\it down} step $RL$. Substitute the {\it up} step $XY$ with a down step, substitute the {\it down} step $RL$ with an {\it up} step. See Figure~\ref{gessel}.
As a result, the portion of $\pi$ between $Y$ and $R$ will be lowered by two levels. Since $\pi$ does not attain level one between $Y$ and $R$, the resulting path is a Dyck path with point $Y$ on level zero. 

Note that $Y$ separates this Dyck path into a pair of Dyck paths $(\rho,\sigma)$. The height of $\rho$ is $h_{-}(\pi)$, the height of $\sigma$ is $h_{+}(\pi)-1$. Thus $|h(\rho) - h(\sigma)| \leq 1$. Since $L$ is the left-most maximum on $\sigma$, this mapping is reversible. Theorem~\ref{T(2,n)} counts the Dyck path $\tau$ of height one twice. This corresponds to the pairs $(\tau, \epsilon)$ and $(\epsilon, \tau)$ in Theorem~\ref{Gessel}, where $\epsilon$ is the empty path.

\section*{Acknowledgements}

We are very thankful to Ira Gessel for his helpful comments. The problem about the combinatorial interpretation of the super Catalan Numbers was first mentioned to us by the late Herb Wilf, to whom we are immensely grateful.

\end{document}